\documentclass[11pt]{amsart}

\usepackage[T1]{fontenc}
\usepackage{lmodern}
\usepackage{microtype}
\usepackage{amsmath,amssymb,mathtools}
\usepackage{xcolor}
\definecolor{internalblue}{HTML}{1F5A94}
\definecolor{citationburgundy}{HTML}{8A2942}
\usepackage[colorlinks=true]{hyperref}
\hypersetup{
  linkcolor=internalblue,
  citecolor=citationburgundy,
  urlcolor=citationburgundy,
  pdftitle={A topological proof that compact Hausdorff spaces are not finitely co-concrete},
  pdfauthor={Marco Abbadini},
  pdfkeywords={compact Hausdorff spaces, finite concreteness, faithful functor preserving directed colimits}
}

\newcommand{\CompHaus}{\mathbf{CompHaus}}
\newcommand{\Set}{\mathbf{Set}}
\newcommand{\N}{\mathbb N}
\newcommand{\cK}{\mathcal K}
\newcommand{\op}{\mathrm{op}}
\newcommand{\im}{\operatorname{im}}
\DeclareMathOperator*{\colim}{colim}

\newtheorem{theorem}{Theorem}[section]
\newtheorem{lemma}[theorem]{Lemma}
\newtheorem{corollary}[theorem]{Corollary}
\theoremstyle{definition}
\newtheorem{definition}[theorem]{Definition}
\theoremstyle{remark}

\title[Compact Hausdorff spaces are not finitely co-concrete]
  {A topological proof that compact Hausdorff spaces are not finitely co-concrete}
\author{Marco Abbadini}
\address{Institut de Recherche en Math\'ematique et Physique,
  Universit\'e catholique de Louvain, Belgium}
\urladdr{\url{https://marcoabbadini-uni.github.io}}

\subjclass[2020]{Primary: 18A30. Secondary: 54D30, 18F60.}
\keywords{compact Hausdorff spaces, finite concreteness, faithful functor preserving directed
colimits, abstract elementary class, first-order theory}

\begin{document}

\begin{abstract}
Lieberman, Rosick\'y, and Vasey proved that
\(\CompHaus^{\op}\)---the opposite of the category of compact Hausdorff spaces---is not finitely concrete by a route through Hilbert and
Banach spaces, commutative unital \(C^*\)-algebras, and Gelfand duality.
We give a short topological proof.
Moreover, we strengthen the result by identifying a specific sequential colimit in \(\CompHaus^{\op}\) that no faithful set-valued functor preserves.
\end{abstract}

\maketitle

\section{Introduction}

Following Lieberman--Rosick\'y--Vasey
\cite[Definition~1]{LiebermanRosickyVasey}, we recall the relevant notion.

\begin{definition}
A category \(\cK\) is \emph{finitely concrete} if there is a faithful
functor \(U\colon\cK\to\Set\) that preserves every directed colimit that
exists in \(\cK\).
\end{definition}

For example, the category \(\mathrm{Mod}(T)\) of models and homomorphisms of a
first-order theory \(T\) is finitely concrete.

Lieberman--Rosick\'y--Vasey proved that \(\CompHaus^{\op}\) is not finitely
concrete by passing through Hilbert spaces, Banach spaces, commutative unital
\(C^*\)-algebras, and Gelfand duality
\cite[Theorem~18, Example~21(2), and the proof of
Theorem~22]{LiebermanRosickyVasey}.

The aim of this note is to give a direct topological proof that $\CompHaus^\op$ is not finitely concrete; moreover, we identify a specific sequential colimit in \(\CompHaus^{\op}\) that no faithful set-valued functor preserves.

We denote by $\N$ the set of natural numbers (including $0$), and by $\N_{>0}$ the set $\N \setminus \{0\}$.
We index $n$-tuples with $\{1, \dots, n\}$ and infinite sequences with $\N_{>0} =\{1, 2, 3, \dots\}$.

For each \(n\in\N\), define
\begin{align*}
  \begin{aligned}
    p_n\colon [0,1]^{n+1}&\longrightarrow [0,1]^n,\\
    (x_1,\dots,x_n,x_{n+1})&\longmapsto (x_1,\dots,x_n)
  \end{aligned}
  &\qquad
  \begin{aligned}
    \pi_n\colon [0,1]^{\N_{>0}}&\longrightarrow [0,1]^n,\\
    (x_1,x_2,\dots)&\longmapsto (x_1,\dots,x_n).
  \end{aligned}
\end{align*}
In \(\CompHaus^{\op}\), the sequence
\begin{equation}\label{eq:diagram}
  [0,1]^0 \xrightarrow{p_0^{\op}} [0,1]^1\xrightarrow{p_1^{\op}}[0,1]^2
   \xrightarrow{p_2^{\op}}[0,1]^3
   \xrightarrow{p_3^{\op}}\cdots
\end{equation}
has colimit \([0,1]^{\N_{>0}}\), with structure maps
\(\pi_n^{\op}\colon [0,1]^n\to [0,1]^{\N_{>0}}\).  Our main result says that, for every faithful
\(U\colon\CompHaus^{\op}\to\Set\), the comparison map for
\eqref{eq:diagram} is not surjective.

The proof rests on a finite zigzag in \([0,1]^{\N_{>0}}\).  We isolate it first.

\section{The fiber zigzag}

Define
\begin{align*}
  \delta\colon [0,1]^{\N_{>0}}& \longrightarrow [0,1]\\
  x & \longmapsto \sum_{k=1}^{\infty}2^{-k}x_k .
\end{align*}
The range of \(\delta\) is contained in \([0,1]\), since
\(0\leq \delta(x)\leq\sum_{k=1}^{\infty}2^{-k}=1\).  Its partial sums are
continuous and converge uniformly, since the tail after the \(N\)-th term is
at most \(2^{-N}\); hence \(\delta\) is continuous.

For a map \(f\) and points \(x,y\) in its domain, write
\[
  x\sim_f y \quad\Longleftrightarrow\quad f(x)=f(y).
\]
Let \(\underline 0=(0,0,\ldots)\) and
\(\underline 1=(1,1,\ldots)\) be the constant points of \([0,1]^{\N_{>0}}\).

\begin{lemma}[Fiber zigzag]\label{lem:zigzag}
For every \(n\in\N\), the points \(\underline 1\) and \(\underline 0\)
are connected by a finite zigzag in \([0,1]^{\N_{>0}}\) whose successive steps alternate
between \(\sim_{\pi_n}\) and \(\sim_\delta\).
\end{lemma}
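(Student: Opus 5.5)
The plan is to build the zigzag explicitly by draining the first \(n\) coordinates, a bit at a time, into the tail. The two relations allow complementary moves:
\begin{itemize}
\item a \(\sim_{\pi_n}\)-step may change the coordinates \(x_{n+1},x_{n+2},\dots\) arbitrarily, as long as \(x_1,\dots,x_n\) stay fixed;
\item a \(\sim_\delta\)-step may change the point arbitrarily, as long as \(\sum_k 2^{-k}x_k\) stays fixed.
\end{itemize}
The tail coordinates \(k>n\) carry total \(\delta\)-weight \(\sum_{k>n}2^{-k}=2^{-n}\). So a \(\sim_\delta\)-step can move up to \(2^{-n}\) of weight from the head into the tail. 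A \(\sim_{\pi_n}\)-step then resets the tail to \(0\), after which the process repeats.

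The case \(n=0\) is trivial. Here \(\pi_0\) is constant, so \(\underline 1\sim_{\pi_0}\underline 0\) in a single step. For \(n\geq 1\), I first take the \(\sim_{\pi_n}\)-step from \(\underline 1\) to \((1,\dots,1,0,0,\dots)\), with \(n\) ones. Throughout, I only work with points of the form \((a,0,0,\dots)\) where \(a\in[0,1]^n\), and the following elementary move is the core of the argument.

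Fix \(k\leq n\) and \(0\leq t\leq \min(a_k,\,2^{k-n})\). Set \(s=2^{k}t\cdot 2^{-k}\cdot 2^{n}=2^{n-k}\cdot 2^{k-n}\cdot\ldots\); more simply, choose a constant tail value \(c=2^{n-k}t\in[0,1]\). Then
\[
(a,0,0,\dots)\sim_\delta (a-te_k,\,c,c,\dots),
\]
because the loss \(2^{-k}t\) in the head is exactly the gain \(c\cdot 2^{-n}\) in the tail. Following this with a \(\sim_{\pi_n}\)-step back to \((a-te_k,0,0,\dots)\) lowers \(x_k\) by \(t\) using two steps of the zigzag. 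Since \(c\le 1\) is guaranteed by the constraint \(t\le 2^{k-n}\), every intermediate point lies in \([0,1]^{\N_{>0}}\).

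Iterating this move, each coordinate \(k\) goes from \(1\) to \(0\) in at most \(2^{n-k}\) moves, each lowering it by \(\min(\text{current value},2^{k-n})\). Doing this successively for \(k=1,\dots,n\) reaches \(\underline 0=(0,\dots,0,0,\dots)\) after finitely many steps. The steps alternate by construction: each \(\sim_\delta\) is followed by a \(\sim_{\pi_n}\). If strict alternation is required and two steps of the same kind would meet, they can be merged, or a trivial step \(x\sim x\) can be inserted, since both relations are reflexive.

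I do not expect a real obstacle. The only point needing care is the bookkeeping that keeps the tail value \(c\) inside \([0,1]\), which is exactly why the head coordinate \(x_k\) can drop by at most \(2^{k-n}\) per move. This is also what forces the zigzag to have length growing like \(2^n\) rather than bounded length, and that growing length is presumably what the later argument exploits.
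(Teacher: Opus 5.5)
Your proof is correct and is essentially the paper's argument: each $\sim_\delta$-step shifts $\delta$-weight from the first $n$ coordinates into the tail, each $\sim_{\pi_n}$-step resets the tail to $0$, and finitely many rounds carry $\underline 1$ to $\underline 0$. The only difference is bookkeeping: the paper lowers all $n$ head coordinates together by $2^{-(n+1)}$ per round, with coordinate $n+1$ (set to $1-2^{-n}$) as the sole buffer, while you drain one coordinate at a time into a constant tail, which takes only $2^n-1$ rounds. Delete the garbled false start defining $s$, and note that the later argument uses only the finiteness of the zigzag, not its length.
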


\begin{proof}
Set
\[
  L_n=\sum_{k=1}^n2^{-k}=1-2^{-n},
  \qquad M=2^{n+1}.
\]
For \(0\leq r\leq M\), let
\[
  z_r=
  \bigl(
    \underbrace{1-r/M,\ldots,1-r/M}_{n\ \mathrm{coordinates}},
    0,0,\ldots
  \bigr),
\]
and, for \(0\leq r<M\), let
\[
  w_r=
  \bigl(
    \underbrace{1-(r+1)/M,\ldots,1-(r+1)/M}_{n\ \mathrm{coordinates}},
    L_n,0,0,\ldots
  \bigr),
\]
where \(L_n\) occupies coordinate \(n+1\).  All these points belong to
\([0,1]^{\N_{>0}}\).  Since \(2^{-(n+1)}=1/M\), for \(0\leq r<M\) we have
\[
  \begin{aligned}
  \delta(z_r)
    &=L_n\left(1-\frac rM\right),\\
  \delta(w_r)
    &=L_n\left(1-\frac{r+1}{M}\right)+2^{-(n+1)}L_n\\
    &=L_n\left(1-\frac rM\right).
  \end{aligned}
\]
Thus \(z_r\sim_\delta w_r\), while the definitions give
\(\pi_n(w_r)=\pi_n(z_{r+1})\).  Finally,
\[
  \pi_n(\underline 1)=\pi_n(z_0),
  \qquad z_M=\underline 0.
\]
Consequently,
\[
  \underline 1\sim_{\pi_n}z_0,\qquad
  z_r\sim_\delta w_r\sim_{\pi_n}z_{r+1}
  \quad(0\leq r<M),\qquad
  z_M=\underline 0.
\]
Together these relations form the required alternating zigzag.
\end{proof}

\section{The fixed sequential-colimit obstruction}

We first spell out the colimit in \eqref{eq:diagram}.  In \(\CompHaus\),
\[
  [0,1]^0\xleftarrow{p_0} [0,1]^1\xleftarrow{p_1}[0,1]^2
   \xleftarrow{p_2}[0,1]^3
   \xleftarrow{p_3}\cdots
\]
has limit \([0,1]^{\N_{>0}}\), with limit maps \(\pi_n\).  Indeed,
\([0,1]^{\N_{>0}}\) is
compact Hausdorff by Tychonoff's theorem, and a compatible family of maps
\(Y\to [0,1]^n\) determines, coordinate by coordinate, a unique map
\(Y\to [0,1]^{\N_{>0}}\),
which is continuous by the definition of the product topology.  Passing to
opposite categories yields \eqref{eq:diagram} and its stated colimit.

For any functor \(U\colon\CompHaus^{\op}\to\Set\), denote by
\begin{equation}\label{eq:comparison}
  \kappa_U\colon
  \colim_{n\in\N}U([0,1]^n)\longrightarrow U([0,1]^{\N_{>0}})
\end{equation}
the canonical comparison map induced by the functions
\(U(\pi_n^{\op})\).  Its image is
\[
  \im(\kappa_U)
  =\bigcup_{n\in\N}\im\bigl(U(\pi_n^{\op})\bigr),
\]
because every element of a colimit of sets has a representative at one of
the stages.

\begin{theorem}\label{thm:main}
If \(U\colon\CompHaus^{\op}\to\Set\) is faithful, then the comparison map
\(\kappa_U\) in \eqref{eq:comparison} is not surjective.  In fact,
\[
  \im\bigl(U(\delta^{\op})\bigr)\nsubseteq\im(\kappa_U).
\]
\end{theorem}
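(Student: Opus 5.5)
The plan is to argue by contradiction and test elements of \(U([0,1]^{\N_{>0}})\) against points, i.e.\ against morphisms out of \([0,1]^{\N_{>0}}\) in \(\CompHaus^{\op}\) into the one-point space \([0,1]^0\). Suppose \(\im(U(\delta^{\op}))\subseteq\im(\kappa_U)=\bigcup_{n}\im(U(\pi_n^{\op}))\). For a point \(a\in[0,1]^{\N_{>0}}\), regard it as a continuous map \(a\colon[0,1]^0\to[0,1]^{\N_{>0}}\), so that \(a^{\op}\) is a morphism \([0,1]^{\N_{>0}}\to[0,1]^0\) in \(\CompHaus^{\op}\). For an element \(s\in U([0,1]^{\N_{>0}})\), write \(\mathrm{ev}_a(s)=U(a^{\op})(s)\in U([0,1]^0)\).

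The first step is two invariance observations. If \(s=U(\pi_n^{\op})(v)\) for some \(v\in U([0,1]^n)\), then by functoriality
\[
\mathrm{ev}_a(s)=U\bigl((\pi_n\circ a)^{\op}\bigr)(v).
\]
Hence \(\mathrm{ev}_a(s)\) depends only on \(\pi_n(a)\), so \(a\sim_{\pi_n}b\) implies \(\mathrm{ev}_a(s)=\mathrm{ev}_b(s)\). Likewise, if \(s=U(\delta^{\op})(u)\) with \(u\in U([0,1])\), then
\[
\mathrm{ev}_a(s)=U\bigl((\delta\circ a)^{\op}\bigr)(u),
\]
so \(a\sim_\delta b\) implies \(\mathrm{ev}_a(s)=\mathrm{ev}_b(s)\).

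Next, fix an arbitrary \(u\in U([0,1])\) and set \(s=U(\delta^{\op})(u)\). By the assumption, \(s\in\im(U(\pi_n^{\op}))\) for some \(n\), where \(n\) depends on \(s\). Thus \(s\) lies simultaneously in the image of \(U(\delta^{\op})\) and of \(U(\pi_n^{\op})\) for this single \(n\). Applying Lemma~\ref{lem:zigzag} with this \(n\), every step of the zigzag from \(\underline 1\) to \(\underline 0\) preserves \(\mathrm{ev}_{(-)}(s)\), by the two invariance observations. Hence \(\mathrm{ev}_{\underline 1}(s)=\mathrm{ev}_{\underline 0}(s)\).

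Unwinding this equality, \(\delta\circ\underline 1\) is the point \(c_1\colon[0,1]^0\to[0,1]\) with value \(\delta(\underline 1)=1\), and \(\delta\circ\underline 0\) is the point \(c_0\) with value \(0\). So we obtain \(U(c_1^{\op})(u)=U(c_0^{\op})(u)\). Since \(u\) was arbitrary, \(U(c_1^{\op})=U(c_0^{\op})\) as functions \(U([0,1])\to U([0,1]^0)\). But \(c_0\neq c_1\), which contradicts faithfulness. The inclusion therefore fails, and in particular \(\kappa_U\) is not surjective.

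The only delicate point I expect is bookkeeping: the index \(n\) depends on \(u\), and this is exactly why the lemma is stated for every \(n\). One also has to handle the direction of arrows in \(\CompHaus^{\op}\) correctly when applying functoriality, namely \((f\circ g)^{\op}=g^{\op}\circ f^{\op}\).
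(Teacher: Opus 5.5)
Your proposal is correct and follows essentially the same argument as the paper: you test elements against points of $[0,1]^{\N_{>0}}$, use the invariances under $\sim_{\pi_n}$ and $\sim_\delta$, and chain them along the zigzag of Lemma~\ref{lem:zigzag} to contradict faithfulness at the endpoints $0$ and $1$. The only difference is the order of quantifiers: the paper first fixes a $v\in U([0,1])$ with $U(e_0^{\op})(v)\neq U(e_1^{\op})(v)$ and shows that the specific element $U(\delta^{\op})(v)$ lies outside $\im(\kappa_U)$, whereas you quantify over all $u$ and invoke faithfulness only at the end, which proves the same non-inclusion.
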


\begin{proof}
Let \(1\) denote the one-point space, and let
\[
  e_0,e_1\colon 1\longrightarrow [0,1]
\]
be the endpoints of \([0,1]\).  They induce distinct arrows
\[
  e_0^{\op},e_1^{\op}\colon [0,1]\longrightarrow 1
\]
in \(\CompHaus^{\op}\).  Since \(U\) is faithful, the functions
\[
  U(e_0^{\op}),U(e_1^{\op})\colon U([0,1])\longrightarrow U(1)
\]
are distinct.  Hence there is \(v\in U([0,1])\) such that
\begin{equation}\label{eq:distinguish}
  U(e_0^{\op})(v)\neq U(e_1^{\op})(v).
\end{equation}

Set
\[
  u \coloneqq U(\delta^{\op})(v)\in U([0,1]^{\N_{>0}}).
\]
Suppose, towards a contradiction, that \(u\in\im(\kappa_U)\).  Then there
are \(n\in\N\) and \(w\in U([0,1]^n)\) such that
\begin{equation}\label{eq:finite-stage}
  u=U(\pi_n^{\op})(w).
\end{equation}
For an element $x \colon 1 \to [0,1]^{\N_{>0}}$, set
\[
  a_x \coloneqq U(x^{\op})(u)\in U(1).
\]
The two representations of \(u\) imply two invariance properties. 
For all $x,y \in [0,1]^{\N_{>0}}$, if
\(\delta(x)=\delta(y)\), then
\[
  a_x =U(x^{\op})\bigl(U(\delta^{\op})(v)\bigr) =U\bigl((\delta\circ x)^{\op}\bigr)(v) =U\bigl((\delta\circ y)^{\op}\bigr)(v) = a_y.
\]
Here \(x^{\op}\circ \delta^{\op}=(\delta\circ x)^{\op}\).  Likewise, if
\(\pi_n(x)=\pi_n(y)\), then \eqref{eq:finite-stage} gives
\[
 a_x =U(x^{\op})\bigl(U(\pi_n^{\op})(w)\bigr)=U\bigl((\pi_n\circ x)^{\op}\bigr)(w) =U\bigl((\pi_n\circ y)^{\op}\bigr)(w) = a_y.
\]
Here \(x^{\op}\circ\pi_n^{\op}=(\pi_n\circ x)^{\op}\).

Applying Lemma~\ref{lem:zigzag} to these two invariances yields
\[
  a_{\underline 0}
  = a_{\underline 1}.
\]
Since \(\delta\circ\underline 0=e_0\) and
\(\delta\circ\underline 1=e_1\), this equality says
\[
  U(e_0^{\op})(v)=U(e_1^{\op})(v),
\]
contrary to \eqref{eq:distinguish}.  Therefore
\(U(\delta^{\op})(v)\notin\im(\kappa_U)\), proving both assertions.
\end{proof}

\begin{corollary}\label{cor:not-fc}
The category \(\CompHaus^{\op}\) is not finitely concrete.
\end{corollary}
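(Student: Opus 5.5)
The plan is to argue by contradiction directly from the definition of finite concreteness, using Theorem~\ref{thm:main} as the only real input. Suppose \(U\colon\CompHaus^{\op}\to\Set\) is faithful and preserves every directed colimit that exists in \(\CompHaus^{\op}\). The sequence \eqref{eq:diagram} is indexed by the poset \(\N\), which is directed. By the discussion at the beginning of this section, this diagram has a colimit in \(\CompHaus^{\op}\), namely \([0,1]^{\N_{>0}}\) with structure maps \(\pi_n^{\op}\). So \eqref{eq:diagram} is one of the directed colimits that \(U\) is required to preserve.

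Preservation means that the cocone \(\bigl(U(\pi_n^{\op})\colon U([0,1]^n)\to U([0,1]^{\N_{>0}})\bigr)_{n\in\N}\) is a colimit cocone in \(\Set\). Equivalently, the canonical comparison map \(\kappa_U\) of \eqref{eq:comparison} is a bijection, because colimits are unique up to the unique isomorphism compatible with the cocones. In particular \(\kappa_U\) is surjective. This contradicts Theorem~\ref{thm:main}, which says that \(\kappa_U\) is not surjective for any faithful \(U\). Hence no such \(U\) exists, and \(\CompHaus^{\op}\) is not finitely concrete.

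There is essentially no obstacle here. The only points to state explicitly are that \(\N\) is a directed index set and that preserving a colimit is equivalent to the comparison map being an isomorphism. The substantive work, namely the zigzag of Lemma~\ref{lem:zigzag} and the element \(U(\delta^{\op})(v)\) lying outside the image of \(\kappa_U\), has already been done in Theorem~\ref{thm:main}.
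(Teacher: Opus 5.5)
Your proposal is correct and follows the paper's proof: the sequence \eqref{eq:diagram} is a directed diagram with a colimit in \(\CompHaus^{\op}\), so a faithful functor preserving directed colimits would make \(\kappa_U\) bijective, contradicting Theorem~\ref{thm:main}. You also spell out the facts the paper leaves implicit, namely that \(\N\) is directed and that preservation is equivalent to \(\kappa_U\) being an isomorphism.
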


\begin{proof}
The sequence \eqref{eq:diagram} is a directed diagram.  A faithful functor
preserving all directed colimits would make \(\kappa_U\) bijective, whereas
Theorem~\ref{thm:main} shows that it is not surjective.
\end{proof}

\section*{Declaration of generative AI and AI-assisted technologies}

During the development of this work, I used OpenAI's ChatGPT
(accessed in July 2026) to seek a direct topological proof that the category \(\CompHaus^{\op}\) is not finitely concrete.
ChatGPT produced the fixed-diagram formulation and the fiber-zigzag proof of Theorem~\ref{thm:main}, and assisted with drafting the manuscript.
I checked the argument in detail and revised the exposition.
I take full responsibility for the content
of the article.

\end{document}